\documentclass[11pt,reqno]{amsart}

\usepackage[T1]{fontenc}
\usepackage[utf8]{inputenc}
\usepackage{lmodern}
\usepackage{amsmath,amssymb,amsthm,mathtools}
\usepackage{microtype}
\usepackage{enumitem}
\usepackage[hidelinks]{hyperref}
\hypersetup{
  pdftitle={Monomial bases for holomorphic functions on Banach spaces with an unconditional basis},
  pdfauthor={Thiago Grando},
  pdfsubject={Monomial Schauder bases in spaces of holomorphic functions},
  pdfkeywords={holomorphic functions, Schauder basis, monomial basis, unconditional basis}
}

\setlist[enumerate]{label=(\roman*),leftmargin=2.2em}
\setlist[itemize]{leftmargin=2em}

\newtheorem{theorem}{Theorem}[section]
\newtheorem{proposition}[theorem]{Proposition}
\newtheorem{lemma}[theorem]{Lemma}
\newtheorem{corollary}[theorem]{Corollary}
\theoremstyle{definition}

\newtheorem{example}[theorem]{Example}
\theoremstyle{remark}
\newtheorem{remark}[theorem]{Remark}

\newcommand{\C}{\mathbb C}
\newcommand{\N}{\mathbb N}
\newcommand{\Nzero}{\mathbb N_0}
\newcommand{\calH}{\mathcal H}
\newcommand{\calP}{\mathcal P}
\newcommand{\supp}{\operatorname{supp}}

\newcommand{\id}{\operatorname{Id}}
\newcommand{\e}{\mathrm e}
\newcommand{\abs}[1]{\lvert #1\rvert}
\newcommand{\norm}[1]{\lVert #1\rVert}

\newcommand{\ellength}{\operatorname{len}}
\newcommand{\dd}{\,\mathrm d}

\title[Monomial bases for holomorphic functions]
{Monomial bases for holomorphic functions on Banach spaces with an unconditional basis}

\author[T. Grando]{Thiago Grando}
\address{Department of Mathematics, Midwestern Paran\'a State University, Guarapuava, Brazil}
\email{tgrando@unicentro.br}

\subjclass[2020]{Primary 46G20; Secondary 46B15, 32A05}
\keywords{Holomorphic functions, Schauder basis, monomial basis, unconditional basis, compact-open topology}
\date{}

\begin{document}

\begin{abstract}
Let $X$ be a complex Banach space with an unconditional Schauder basis. We prove that the monomials associated with this basis, endowed in each homogeneous degree with the square order and globally with any compatible ordering, form a Schauder basis for the space $(\calH(X),\tau_0)$ of entire holomorphic functions on $X$ with the compact-open topology. The proof relies on two main ideas. First, coordinate-tail conditions yield a fundamental system of compact solid subsets of $X$. Second, Fourier projections in the coordinates give the uniform estimate $c_{K,n}\leq n+1$ for the basis constant of the degree-$n$ monomials with respect to the supremum seminorm on each such compact set $K$. We derive applications to Banach sequence lattices with dense $c_{00}$, including classical, Lorentz, Orlicz-heart and Schreier-type sequence spaces, and to general vector-valued $E$-sums. In particular, the result covers mixed $c_0$- and $\ell_r$-sums of finite-dimensional $\ell_p$ spaces and the Lorentz predual $d_*(w,1)$.
\end{abstract}

\maketitle

\section{Introduction}

Let $X$ be a complex Banach space with a Schauder basis. A classical problem in infinite-dimensional holomorphy asks when the monomials associated with the basis of $X$ form a Schauder basis in spaces of homogeneous polynomials or holomorphic functions on $X$. Ryan proved that, for every $n\in\N$, the degree-$n$ monomials in the square order form a Schauder basis for $(\calP({}^nX),\tau_0)$, where $\tau_0$ denotes the compact-open topology; see \cite{RyanThesis}. Under additional hypotheses, analogous results hold for normed spaces of homogeneous polynomials. Dimant and Dineen obtained a related result for polynomials weakly continuous on bounded sets \cite{DimantDineen}, while Defant and Kalton investigated unconditionality in spaces of homogeneous polynomials \cite{DefantKalton}.

For spaces of entire functions, the passage from bases in the homogeneous components to a single basis of the whole space requires quantitative control of the basis constants as the degree grows. Dineen and Mujica established an abstract criterion in terms of $S^*$-absolute decompositions \cite[Theorem~1]{DineenMujica} and applied it to $(\calH(c_0),\tau_\omega)$ and $(\calH_b(c_0),\tau_b)$. Ryan had previously obtained an equicontinuous unconditional monomial basis for $(\calH(\ell_1),\tau_0)$ \cite{RyanL1}. More recently, the monomial basis problem for $(\calH(\ell_p),\tau_0)$ was considered in \cite{GrandoLourencoLp}. Related work of Dineen concerns holomorphic decompositions and the coincidence of natural locally convex topologies on spaces of holomorphic functions over spaces with bases or unconditional decompositions; see \cite{Dineen1995Studia,Dineen1995Extracta}. The question addressed here is quantitative and different in emphasis: we seek a global monomial Schauder basis for the compact-open topology and obtain an explicit degree-wise bound for the corresponding initial projections.

The purpose of this paper is to give a common argument that applies whenever the underlying Banach space has an unconditional Schauder basis. After an equivalent renorming, such a basis may be assumed to be $1$-unconditional. If $(e_j)$ denotes this basis and $\pi_N$ the $N$th coordinate projection, we introduce compact sets of the form
\[
 K_\eta=\bigl\{x\in X:\norm{(\id-\pi_N)x}\leq \eta_N\text{ for every }N\in\Nzero\bigr\},
\]
where $\eta=(\eta_N)$ is a positive null sequence and $\pi_0=0$. These sets form a fundamental system of compact subsets of $X$ and are solid with respect to the basis.

The decisive estimate is obtained by coordinatewise Fourier projections. If $K$ is a bounded solid set and $c_{K,n}$ is the basis constant of the degree-$n$ monomials in the square order for the seminorm of uniform convergence on $K$, then
\[
 c_{K,n}\leq n+1.
\]
In particular, $\limsup_{n\to\infty}c_{K,n}^{1/n}=1$. The criterion of Dineen and Mujica then yields the desired basis of $(\calH(X),\tau_0)$. This estimate replaces arguments based on comparing the norm of a product with the product of two norms; such comparisons need not admit a constant independent of the degree.

The general theorem yields a broad collection of applications. In particular, it applies to Banach sequence lattices for which $c_{00}$ is dense, including $c_0$, $\ell_p$, Lorentz sequence spaces, Orlicz hearts and Schreier-type spaces. It is also stable under vector-valued $E$-sums of spaces with $1$-unconditional bases. Thus, for example, it applies to
\[
 c_0\left(\bigoplus_{j=1}^{\infty}\ell_{p_j}^{d_j}\right)
 \quad\text{and}\quad
 \ell_r\left(\bigoplus_{j=1}^{\infty}\ell_{p_j}^{d_j}\right),
\]
where $1\leq r<\infty$, $1\leq p_j<\infty$ and $d_j\in\N$. The original examples
\[
 c_0\left(\bigoplus_{j=1}^{\infty}\ell_p^j\right)
 \quad\text{and}\quad d_*(w,1)
\]
are recovered as concrete motivating cases.

The paper is organized as follows. Section~\ref{sec:preliminaries} recalls the square order and the Dineen--Mujica criterion. Section~\ref{sec:compact} constructs the compact solid sets. Section~\ref{sec:fourier} proves the estimate $c_{K,n}\leq n+1$. Section~\ref{sec:main} proves the main theorem. Section~\ref{sec:applications} develops general applications to sequence lattices, vector-valued $E$-sums and the motivating concrete examples.

\section{Preliminaries}\label{sec:preliminaries}

All vector spaces are over $\C$. For a Banach space $X$, we write $\calP({}^nX)$ for the space of continuous scalar-valued $n$-homogeneous polynomials on $X$, with the convention $\calP({}^0X)=\C$. Every $f\in\calH(X)$ has a Taylor expansion at the origin,
\[
 f=\sum_{n=0}^{\infty}P_n,
\]
where $P_n\in\calP({}^nX)$ and the series converges uniformly on compact subsets of $X$. The compact-open topology $\tau_0$ on $\calH(X)$ is generated by the seminorms
\[
 \norm{f}_K=\sup_{x\in K}\abs{f(x)},
\]
where $K$ ranges over the compact subsets of $X$.

Let $(e_j)_{j=1}^{\infty}$ be a Schauder basis of $X$, with coefficient functionals $(e_j^*)_{j=1}^{\infty}$ and coordinate projections
\[
 \pi_Nx=\sum_{j=1}^{N}e_j^*(x)e_j,\qquad N\in\N,
 \quad\text{and}\quad \pi_0=0.
\]
We use the set
\[
 \Nzero^{(\N)}=\bigl\{\alpha=(\alpha_j)_{j\geq1}:\alpha_j\in\Nzero
 \text{ and }\alpha_j=0\text{ for all but finitely many }j\bigr\}
\]
of finitely supported multi-indices. For $\alpha\in\Nzero^{(\N)}$, set
\[
 \abs{\alpha}=\sum_{j=1}^{\infty}\alpha_j,
 \qquad
 \ellength(\alpha)=\max\supp(\alpha),
\]
with $\ellength(0)=0$, and define the associated monomial by
\[
 e^{*\alpha}(x)=\prod_{j=1}^{\infty}e_j^*(x)^{\alpha_j}.
\]

For multi-indices $\alpha$ and $\beta$ of the same degree, the \emph{square order} is defined by $\alpha\prec\beta$ if either
\begin{enumerate}
 \item $\ellength(\alpha)<\ellength(\beta)$, or
 \item $\ellength(\alpha)=\ellength(\beta)$ and, for
 \[
  j=\max\{i:\alpha_i\neq\beta_i\},
 \]
 one has $\alpha_j<\beta_j$.
\end{enumerate}
This is the reverse lexicographic order within each fixed degree. We denote the degree-$n$ monomials in this order by $(P_{n,m})_{m\geq1}$.

An ordering of all monomials is called \emph{compatible} if its restriction to the degree-$n$ monomials agrees with the square order for every $n$.

We recall the criterion used to assemble the bases of the homogeneous components. A sequence $(E_n)_{n\geq0}$ of subspaces of a locally convex space $E$ is a \emph{decomposition} if every $x\in E$ admits a unique representation $x=\sum_{n=0}^{\infty}x_n$ with $x_n\in E_n$. It is called \emph{$S^*$-absolute} if the topology of $E$ admits a fundamental family $\mathcal P$ of seminorms such that, for every $p\in\mathcal P$ and every scalar sequence $(a_n)$ satisfying
\[
 \limsup_{n\to\infty}\abs{a_n}^{1/n}<\infty,
\]
the map
\[
 x=\sum_{n=0}^{\infty}x_n
 \longmapsto
 \sum_{n=0}^{\infty}\abs{a_n}p(x_n)
\]
is a continuous seminorm on $E$. We state the abstract theorem in the form needed below.

\begin{theorem}[Dineen--Mujica \cite{DineenMujica}]\label{thm:DM}
Let $E$ be a locally convex space with an $S^*$-absolute decomposition $E=\sum_{n\geq0}E_n$. Suppose that the topology of $E$ admits a fundamental family $\mathcal Q$ of seminorms satisfying
\[
 q\left(\sum_{n=0}^{\infty}x_n\right)=\sum_{n=0}^{\infty}q(x_n),
 \qquad x_n\in E_n,
\]
and that every $E_n$ has a Schauder basis $(u_{n,m})_{m\geq1}$. For $q\in\mathcal Q$, let $c_{q,n}$ be the least constant such that
\[
 q\left(\sum_{m=1}^{s}a_mu_{n,m}\right)
 \leq c_{q,n}
 q\left(\sum_{m=1}^{t}a_mu_{n,m}\right)
\]
whenever $s<t$ and $a_1,\ldots,a_t\in\C$. If
\[
 \limsup_{n\to\infty}c_{q,n}^{1/n}<\infty
\]
for every $q\in\mathcal Q$, then $(u_{n,m})_{n,m}$, in every compatible ordering, is a Schauder basis of $E$.
\end{theorem}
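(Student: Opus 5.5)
Since the compact-open statement itself is quoted from \cite{DineenMujica}, I would reprove it directly: show that the partial-sum operators of the combined system are equicontinuous, and that every element is the limit of its expansion. Fix a compatible ordering of $(u_{n,m})_{n,m}$ and $x=\sum_n x_n\in E$, with $x_n\in E_n$.

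\emph{Step 1: the homogeneous pieces.} Each coordinate map $x\mapsto x_n$ is continuous: apply the $S^*$-absolute property with $a_k=\delta_{kn}$. Write $x_n=\sum_m b_{n,m}(x)u_{n,m}$ in the basis of $E_n$; then each $x\mapsto b_{n,m}(x)$ is continuous. Let $S_{n,s}$ denote the $s$-th partial-sum operator of the basis of $E_n$, with $S_{n,0}=0$ and $S_{n,\infty}=\id$. Letting $t\to\infty$ in the defining inequality for $c_{q,n}$ and using continuity of $q$ gives
\[
q(S_{n,s}y)\le c'_{q,n}\,q(y),\qquad y\in E_n,\ 0\le s\le\infty,
\]
where $c'_{q,n}=\max(c_{q,n},1)$ handles $s=0,\infty$. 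Note that $\limsup_n (c'_{q,n})^{1/n}<\infty$ still holds.

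\emph{Step 2: the combined partial sums.} Because the ordering is compatible, the $k$-th partial sum $T_k x$ of the combined expansion has the form $\sum_n S_{n,s_n(k)}x_n$. Here only finitely many $s_n(k)$ are nonzero, and within each degree the square order is respected. Using the additivity of $q\in\mathcal Q$ (or just the triangle inequality) we get
\[
q(T_kx)\le\sum_n c'_{q,n}\,q(x_n).
\]
Since $\mathcal P$ and $\mathcal Q$ are both fundamental, there are $p\in\mathcal P$ and $C>0$ with $q\le Cp$. Then $\sum_n c'_{q,n}q(x_n)\le C\sum_n c'_{q,n}p(x_n)$, and the right-hand side is a continuous seminorm by $S^*$-absoluteness. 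Hence the operators $(T_k)$ are equicontinuous.

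\emph{Step 3: convergence and uniqueness.} I expect this step to be the main point. Given $q$ and $\varepsilon>0$, choose $N$ with $\sum_{n>N}c'_{q,n}q(x_n)<\varepsilon$; this is possible because the full series converges. For $k$ large, the partial sum $T_k$ has advanced far enough in each of the finitely many degrees $n\le N$ that $q(x_n-S_{n,s_n(k)}x_n)<\varepsilon/(N+1)$. Every degree appears infinitely often in the ordering, so $s_n(k)\to\infty$ for each fixed $n$. Then
\[
q(x-T_kx)\le\sum_{n\le N}q\bigl(x_n-S_{n,s_n(k)}x_n\bigr)+\sum_{n>N}(1+c'_{q,n})q(x_n)<3\varepsilon.
\]
So $x=\lim_k T_kx$.

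Uniqueness of the coefficients follows from uniqueness in the decomposition and in each basis. Suppose $\sum a_{n,m}u_{n,m}$ converges to $0$ in the given order. Applying the continuous projection onto $E_n$ shows $\sum_m a_{n,m}u_{n,m}=0$, hence $a_{n,m}=0$. The coefficient functionals $b_{n,m}$ are continuous by Step 1, so $(u_{n,m})$ is a Schauder basis of $E$ in every compatible ordering.
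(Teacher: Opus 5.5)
The paper does not prove this statement. It quotes it from \cite[Theorem~1]{DineenMujica} and adds only Remark~\ref{rem:DM-indexing}, which says that passing from $\N$- to $\Nzero$-indexing is harmless. So your argument is a self-contained replacement for a citation, and it is correct.

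Your proof makes explicit where each hypothesis enters:
\begin{enumerate}
\item $S^*$-absoluteness with $a_k=\delta_{kn}$ gives continuity of the projections onto $E_n$. Hence all coefficient functionals $b_{n,m}$ are continuous, which is not automatic in a general locally convex space.
\item $S^*$-absoluteness with $a_n=\max(c_{q,n},1)$ gives the summable majorant $\sum_n c'_{q,n}q(x_n)$ that controls the tail in Step~3.
\end{enumerate}
It also shows, as your parenthetical remark suggests, that the additivity of the seminorms in $\mathcal Q$ is never used. The triangle inequality and continuity of $q$, which give $q(\sum_n y_n)\le\sum_n q(y_n)$ for convergent series, suffice. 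The citation buys only brevity; Proposition~\ref{prop:qeta} verifies additivity anyway for the application.

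Two small points need tidying.
\begin{enumerate}
\item You assert that every degree appears infinitely often in the ordering. This fails for finite-dimensional components, such as the one-dimensional $E_0$ in the paper's application. There $S_{n,s_n(k)}=\id$ on $E_n$ for all large $k$, and your estimate goes through unchanged.
\item Step~2 uses finiteness of every $c_{q,n}$. If one allows $c_{q,n}=\infty$, as the paper does for $c_{K,n}\in[0,\infty]$, the hypothesis $\limsup_n c_{q,n}^{1/n}<\infty$ only forces finiteness for large $n$. This is harmless: Step~3 needs only convergence of the tail $\sum_{n>N}c'_{q,n}q(x_n)$, and $N$ can be taken beyond the finitely many exceptional degrees. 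The equicontinuity of $(T_k)$ is a bonus, not needed for the basis conclusion once the coefficient functionals are known to be continuous.
\end{enumerate}
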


\begin{remark}\label{rem:DM-indexing}
The theorem in \cite[Theorem~1]{DineenMujica} is indexed by positive integers. The version above, indexed by $\Nzero$, is equivalent: one may adjoin the one-dimensional component $E_0$ with its one-element basis and basis constant equal to $1$. We use only this abstract criterion, not the topology-specific applications developed later in that paper.
\end{remark}

\section{A fundamental system of compact solid sets}\label{sec:compact}

A Schauder basis $(e_j)$ is called $1$-unconditional if
\[
 \norm{\sum_{j=1}^{\infty}\lambda_jx_je_j}
 \leq
 \norm{\sum_{j=1}^{\infty}x_je_j}
\]
whenever $\abs{\lambda_j}\leq1$ for all $j$. In particular, every coordinate projection and every diagonal operator with unimodular diagonal entries is a contraction.

A subset $A\subset X$ is called \emph{solid} with respect to $(e_j)$ if, whenever
\[
 x=\sum_{j=1}^{\infty}x_je_j\in A
 \quad\text{and}\quad
 y=\sum_{j=1}^{\infty}y_je_j
 \quad\text{satisfy}\quad
 \abs{y_j}\leq\abs{x_j}\ \text{for all }j,
\]
then $y\in A$.

Let
\[
 c_0^+(\Nzero)
 =\bigl\{\eta=(\eta_N)_{N\geq0}:\eta_N>0\text{ for every }N
 \text{ and }\eta_N\longrightarrow0\bigr\}.
\]
For $\eta\in c_0^+(\Nzero)$, define
\begin{equation}\label{eq:Keta}
 K_\eta
 =\bigl\{x\in X:\norm{(\id-\pi_N)x}\leq\eta_N
 \text{ for every }N\in\Nzero\bigr\}.
\end{equation}
The condition for $N=0$ gives $\norm{x}\leq\eta_0$.

\begin{lemma}\label{lem:uniform-tail}
For every compact subset $K\subset X$,
\[
 \sup_{x\in K}\norm{(\id-\pi_N)x}\longrightarrow0.
\]
\end{lemma}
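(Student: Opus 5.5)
The plan is the standard equicontinuity-plus-pointwise-convergence argument. Two facts drive it. First, the operators $\id-\pi_N$ are uniformly bounded: by the uniform boundedness principle (equivalently, the definition of the basis constant) $M:=\sup_N\norm{\pi_N}<\infty$, hence $\norm{\id-\pi_N}\leq 1+M$ for every $N\in\Nzero$. Under the $1$-unconditional normalisation of this section we even have $\norm{\id-\pi_N}\leq1$, but the argument does not need this. Second, for each fixed $x\in X$ we have $\norm{(\id-\pi_N)x}\to0$, because $x=\sum_j e_j^*(x)e_j$ converges in norm.

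Given a compact $K$ and $\varepsilon>0$, I would first use total boundedness of $K$ to pick a finite set $x_1,\dots,x_r\in K$ with
\[
K\subset\bigcup_{i=1}^r\bigl\{x:\norm{x-x_i}<\varepsilon/(2(1+M))\bigr\}.
\]
By the pointwise convergence, there is $N_0$ such that $\norm{(\id-\pi_N)x_i}<\varepsilon/2$ for all $N\geq N_0$ and all $i\leq r$; this uses only finitely many points. For an arbitrary $x\in K$, choose $i$ with $\norm{x-x_i}<\varepsilon/(2(1+M))$ and estimate
\[
\norm{(\id-\pi_N)x}\leq\norm{(\id-\pi_N)(x-x_i)}+\norm{(\id-\pi_N)x_i}<(1+M)\frac{\varepsilon}{2(1+M)}+\frac{\varepsilon}{2}=\varepsilon
\]
for every $N\geq N_0$. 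Hence $\sup_{x\in K}\norm{(\id-\pi_N)x}\leq\varepsilon$ for $N\geq N_0$, which proves the lemma.

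The only point needing care is the uniform bound $\sup_N\norm{\pi_N}<\infty$, which is the standard basis-constant fact; everything else is routine. An equivalent formulation is that an equicontinuous family of operators converging pointwise converges uniformly on compact sets, and one could simply cite that.
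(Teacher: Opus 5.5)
Your proposal is correct and follows essentially the same argument as the paper: a finite net in $K$, a uniform bound on $\norm{\id-\pi_N}$, and pointwise convergence at finitely many net points. The only differences are cosmetic: you use $\varepsilon/2$ splits with the bound $1+M$ obtained from the basis constant, where the paper takes $M=\sup_N\norm{\id-\pi_N}$ directly and uses $\varepsilon/3$.
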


\begin{proof}
The operators $\id-\pi_N$ are uniformly bounded and converge pointwise to zero. Let $M=\sup_N\norm{\id-\pi_N}<\infty$. Given $\varepsilon>0$, choose a finite $\varepsilon/(3M)$-net $\{x_1,\ldots,x_r\}$ in $K$. Choose $N_0$ such that
\[
 \norm{(\id-\pi_N)x_j}<\frac{\varepsilon}{3}
\]
for $N\geq N_0$ and $1\leq j\leq r$. If $x\in K$, choose $j$ with $\norm{x-x_j}<\varepsilon/(3M)$. Then, for $N\geq N_0$,
\[
 \norm{(\id-\pi_N)x}
 \leq M\norm{x-x_j}+\norm{(\id-\pi_N)x_j}
 <\frac{2\varepsilon}{3}<\varepsilon.
\]
\end{proof}

\begin{proposition}\label{prop:fundamental-compacts}
Let $X$ have a $1$-unconditional Schauder basis. Then the family
\[
 \{K_\eta:\eta\in c_0^+(\Nzero)\}
\]
is a fundamental system of compact, absolutely convex, solid subsets of $X$. Moreover, for every $r>0$,
\begin{equation}\label{eq:scaling}
 rK_\eta=K_{r\eta}.
\end{equation}
\end{proposition}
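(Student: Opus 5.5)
We verify the following properties of $K_\eta$ in turn: absolute convexity, solidity, the scaling identity, compactness, and finally that the family is fundamental. The order matters only in that solidity (and $1$-unconditionality) is what makes the other steps short.

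\emph{Absolute convexity, solidity, scaling.} Each map $x\mapsto\norm{(\id-\pi_N)x}$ is a continuous seminorm. Hence $K_\eta$ is an intersection of closed absolutely convex sets, so it is closed and absolutely convex.

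For solidity, take $|y_j|\le|x_j|$ for all $j$ and write $y_j=\lambda_jx_j$ with $|\lambda_j|\le1$ (put $\lambda_j=0$ when $x_j=0$). Then $(\id-\pi_N)y=\sum_{j>N}\lambda_jx_je_j$. By $1$-unconditionality its norm is at most $\norm{(\id-\pi_N)x}\le\eta_N$, so $y\in K_\eta$.

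Scaling is immediate from homogeneity of the seminorms: $x\in K_\eta$ if and only if $rx\in K_{r\eta}$.

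\emph{Compactness.} $K_\eta$ is closed in the complete space $X$, so it suffices to show it is totally bounded. Fix $\varepsilon>0$ and choose $N$ with $\eta_N<\varepsilon/2$. Every $x\in K_\eta$ satisfies $\norm{x-\pi_Nx}\le\eta_N<\varepsilon/2$. Moreover $\pi_N(K_\eta)$ is a bounded subset of the finite-dimensional space $\operatorname{span}\{e_1,\dots,e_N\}$, since $\norm{\pi_Nx}\le\norm{x}\le\eta_0$. It therefore has a finite $\varepsilon/2$-net, which is then an $\varepsilon$-net for $K_\eta$.

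\emph{Fundamental system.} This is the step needing a small construction; the only delicate point is that $\eta_N$ must be strictly positive. Given a compact $K\subset X$, set
\[
 \eta_N=\sup_{x\in K}\norm{(\id-\pi_N)x}+2^{-N}.
\]
Each $\eta_N$ is finite because $K$ is bounded, and $\eta_N>0$. By Lemma~\ref{lem:uniform-tail}, $\eta_N\to0$, so $\eta\in c_0^+(\Nzero)$, and clearly $K\subset K_\eta$. Hence every compact set is contained in some $K_\eta$, and the family is a fundamental system of compact, absolutely convex, solid sets.
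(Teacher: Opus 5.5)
Your proof is correct and follows the paper's argument essentially step by step. You get closedness and absolute convexity from the seminorms $\norm{(\id-\pi_N)\,\cdot\,}$, total boundedness from the finite-dimensional range of $\pi_N$ together with $\eta_N<\varepsilon/2$, solidity from $1$-unconditionality, and the fundamental property from Lemma~\ref{lem:uniform-tail} plus a positive correction term. Your correction term $2^{-N}$ in place of the paper's $1/(N+1)$ makes no difference.
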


\begin{proof}
Fix $\eta\in c_0^+(\Nzero)$. Each set
\[
 \bigl\{x:\norm{(\id-\pi_N)x}\leq\eta_N\bigr\}
\]
is closed and absolutely convex, so $K_\eta$ is closed and absolutely convex. It is bounded because $\norm{x}\leq\eta_0$ for $x\in K_\eta$.

To prove total boundedness, let $\varepsilon>0$ and choose $N$ such that $\eta_N<\varepsilon/2$. The set $\pi_N(K_\eta)$ is bounded in the finite-dimensional space $\pi_N(X)$ and hence is totally bounded. Choose $y_1,\ldots,y_r\in\pi_N(X)$ such that
\[
 \pi_N(K_\eta)\subset\bigcup_{j=1}^{r}B(y_j,\varepsilon/2).
\]
For $x\in K_\eta$, choose $j$ with $\norm{\pi_Nx-y_j}<\varepsilon/2$. Then
\[
 \norm{x-y_j}
 \leq\norm{(\id-\pi_N)x}+\norm{\pi_Nx-y_j}
 <\varepsilon.
\]
Thus $K_\eta$ is totally bounded. Since it is closed in the Banach space $X$, it is compact.

Let $K\subset X$ be compact and define
\[
 \delta_N=\sup_{x\in K}\norm{(\id-\pi_N)x},\qquad N\in\Nzero.
\]
By Lemma~\ref{lem:uniform-tail}, $\delta_N\to0$. Hence
\[
 \eta_N=\delta_N+\frac{1}{N+1}
\]
defines an element of $c_0^+(\Nzero)$, and $K\subset K_\eta$. Therefore the family is fundamental.

Suppose now that $x\in K_\eta$ and $\abs{y_j}\leq\abs{x_j}$ for every $j$. Since the basis is $1$-unconditional, coordinatewise domination gives
\[
 \norm{(\id-\pi_N)y}
 \leq\norm{(\id-\pi_N)x}
 \leq\eta_N
\]
for every $N$. Thus $y\in K_\eta$, proving solidity. Finally, \eqref{eq:scaling} follows directly from the definition.
\end{proof}

\section{Fourier projections and the basis constants}\label{sec:fourier}

Let $K\subset X$ be a bounded solid set. For $j\in\N$ and $\theta\in\mathbb R$, define the coordinate rotation
\[
 R_{j,\theta}\left(\sum_{i=1}^{\infty}x_ie_i\right)
 =\sum_{i\neq j}x_ie_i+\e^{i\theta}x_je_j.
\]
Solidity implies $R_{j,\theta}(K)=K$. For $r\in\Nzero$ and $P\in\calP({}^nX)$, define
\begin{equation}\label{eq:Delta}
 \Delta_{j,r}P(x)
 =\frac{1}{2\pi}\int_0^{2\pi}
 P(R_{j,\theta}x)\e^{-ir\theta}\dd\theta.
\end{equation}
This operator selects the monomials for which the exponent of the $j$th coordinate is equal to $r$.

\begin{lemma}\label{lem:fourier-contraction}
If $K$ is bounded and solid, then
\[
 \norm{\Delta_{j,r}P}_K\leq\norm{P}_K
\]
for every $j,r$ and every homogeneous polynomial $P$. Every finite composition of Fourier projections involving distinct coordinates is also a contraction for $\norm{\cdot}_K$.
\end{lemma}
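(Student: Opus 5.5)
The contraction bound follows straight from the integral formula \eqref{eq:Delta} together with the invariance $R_{j,\theta}(K)=K$, which was noted just before the lemma as a consequence of solidity. Fix $x\in K$. Since $R_{j,\theta}$ only multiplies the $j$th coordinate by a unimodular scalar, we have $\abs{(R_{j,\theta}x)_i}=\abs{x_i}$ for every $i$. Solidity then gives $R_{j,\theta}x\in K$, and hence $\abs{P(R_{j,\theta}x)}\le\norm{P}_K$ for every $\theta$.

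The function $\theta\mapsto P(R_{j,\theta}x)$ is continuous; indeed it is a polynomial in $\e^{i\theta}$, since $P$ is a continuous polynomial and $\theta\mapsto R_{j,\theta}x$ is continuous into $X$. The integral is therefore well defined. Estimating the integrand in absolute value and using $\abs{\e^{-ir\theta}}=1$, we get
\[
 \abs{\Delta_{j,r}P(x)}\le\frac{1}{2\pi}\int_0^{2\pi}\abs{P(R_{j,\theta}x)}\dd\theta\le\norm{P}_K .
\]
Taking the supremum over $x\in K$ yields $\norm{\Delta_{j,r}P}_K\le\norm{P}_K$. Boundedness of $K$ guarantees $\norm{P}_K<\infty$, so the inequality is meaningful.

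For compositions, we first check that $\Delta_{j,r}P$ is again an $n$-homogeneous continuous polynomial, so that further projections can be applied to it. Write $P(R_{j,\theta}x)$ via the expansion of $P$ along $x=y+x_je_j$, where $y=(\id-\pi_j+\pi_{j-1})x$:
\[
 P(R_{j,\theta}x)=\sum_{k=0}^n\binom nk \check P(y^{n-k},e_j^{k})\,x_j^k\e^{ik\theta},
\]
with $\check P$ the associated symmetric $n$-linear form. It follows that
\[
 \Delta_{j,r}P(x)=\binom nr\check P(y^{n-r},e_j^r)\,e_j^*(x)^r\quad(0\le r\le n),
\]
and $\Delta_{j,r}P=0$ for $r>n$. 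This is a continuous $n$-homogeneous polynomial, since $y$ depends linearly and continuously on $x$.

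With this in hand, a composition $\Delta_{j_1,r_1}\cdots\Delta_{j_k,r_k}$ is a contraction by induction on $k$, applying the single-projection bound at each step. Distinctness of the coordinates is not needed for the contraction property itself; it only matters elsewhere, for the selection interpretation. Alternatively, one can write the composition directly as an average over the torus $\mathbb T^k$ of $P$ evaluated at multi-rotated points, which lie in $K$ by solidity.

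The only slightly delicate point is verifying that $\Delta_{j,r}P$ remains a continuous homogeneous polynomial so that iteration makes sense. The expansion above settles it, so I expect no real obstacle here.
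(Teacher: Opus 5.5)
Your proof is correct. The single-projection bound is the paper's argument essentially verbatim; the two proofs differ only in how they handle compositions.

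The paper does not iterate. It uses Fubini and the commutativity of the coordinate rotations to write $\Delta_{j_1,r_1}\cdots\Delta_{j_s,r_s}P(x)$ as a single average over $[0,2\pi]^s$ of $P$ evaluated at $R_{j_1,\theta_1}\cdots R_{j_s,\theta_s}x$, which lies in $K$ by solidity. It then bounds that integral in one step; this is the alternative you mention in passing.

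Your main route instead iterates the one-step bound. For that you first check that $\Delta_{j,r}$ maps $\calP({}^nX)$ into itself, a point the paper leaves tacit even though \eqref{eq:Delta} is only stated for $P\in\calP({}^nX)$. Your formula $\Delta_{j,r}P(x)=\binom nr\check P(y^{n-r},e_j^r)\,e_j^*(x)^r$ also makes precise the paper's unproved description of $\Delta_{j,r}$ as selecting the monomials with $j$th exponent $r$, which Proposition~\ref{prop:n+1} relies on. The torus formula, by contrast, expresses the composite directly in terms of $P$ and needs nothing about the intermediate functions beyond continuity of the integrand for Fubini.

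Your remark that distinctness of the coordinates plays no role in the contraction property is also correct.
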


\begin{proof}
For $x\in K$, the rotation $R_{j,\theta}x$ belongs to $K$. Therefore
\[
 \abs{\Delta_{j,r}P(x)}
 \leq\frac{1}{2\pi}\int_0^{2\pi}
 \abs{P(R_{j,\theta}x)}\dd\theta
 \leq\norm{P}_K.
\]
Now let $j_1,\ldots,j_s$ be distinct coordinates and let $r_1,\ldots,r_s\in\Nzero$. Since the coordinate rotations commute, repeated use of \eqref{eq:Delta} and Fubini's theorem gives
\begin{align*}
 &\bigl(\Delta_{j_1,r_1}\cdots\Delta_{j_s,r_s}P\bigr)(x)\\
 &\quad=\frac{1}{(2\pi)^s}
 \int_{[0,2\pi]^s}
 P\bigl(R_{j_1,\theta_1}\cdots R_{j_s,\theta_s}x\bigr)
 \e^{-i(r_1\theta_1+\cdots+r_s\theta_s)}
 \dd\theta_1\cdots\dd\theta_s.
\end{align*}
Solidity implies that every point $R_{j_1,\theta_1}\cdots R_{j_s,\theta_s}x$ belongs to $K$. Taking absolute values in the preceding formula and then the supremum over $x\in K$ yields
\[
 \norm{\Delta_{j_1,r_1}\cdots\Delta_{j_s,r_s}P}_K
 \leq\norm{P}_K.
\]
\end{proof}

Let $P\in\calP({}^nX)$ and let $\gamma$ be a degree-$n$ multi-index, with $k=\ellength(\gamma)$. The finite-dimensional polynomial $P\circ\pi_k$ has a unique expansion
\[
 P\circ\pi_k
 =\sum_{\substack{\abs{\beta}=n\\ \ellength(\beta)\leq k}}
 a_\beta(P)e^{*\beta}.
\]
We define the square partial sum ending at $e^{*\gamma}$ by
\begin{equation}\label{eq:Sgamma-definition}
 S_\gamma P
 =\sum_{\substack{\abs{\beta}=n\\ \beta\preceq\gamma}}
 a_\beta(P)e^{*\beta}.
\end{equation}
This definition is independent of the chosen finite-dimensional coordinate space containing the support of $\gamma$, because finite-dimensional monomial coefficients are unique.

For a bounded solid set $K\subset X$, let $c_{K,n}\in[0,\infty]$ be the least constant $C$ such that
\begin{equation}\label{eq:cKn-definition}
 \norm{\sum_{m=1}^{s}a_mP_{n,m}}_K
 \leq C\norm{\sum_{m=1}^{t}a_mP_{n,m}}_K
\end{equation}
whenever $s<t$ and $a_1,\ldots,a_t\in\C$. Once the monomials are known to be a Schauder basis, $c_{K,n}$ is their usual basis constant relative to $\norm{\cdot}_K$.

\begin{proposition}\label{prop:n+1}
Let $X$ have a $1$-unconditional Schauder basis and let $K\subset X$ be bounded and solid. For every $n\in\Nzero$, every $P\in\calP({}^nX)$ and every degree-$n$ multi-index $\gamma$,
\begin{equation}\label{eq:partial-bound}
 \norm{S_\gamma P}_K\leq(n+1)\norm{P}_K.
\end{equation}
Consequently, the constant $c_{K,n}$ defined by \eqref{eq:cKn-definition} satisfies
\begin{equation}\label{eq:basis-constant}
 c_{K,n}\leq n+1.
\end{equation}
\end{proposition}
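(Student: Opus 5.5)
The plan is to write $S_\gamma P$ as a sum of at most $n+1$ terms. Each term will be a composition of Fourier projections $\Delta_{j,r}$ in distinct coordinates applied to $P\circ\pi_k$. Lemma~\ref{lem:fourier-contraction} then bounds each term by $\norm{P\circ\pi_k}_K$. Finally, solidity gives $\pi_k(K)\subset K$, since $\pi_k$ only replaces some coordinates by $0$. Hence $\norm{P\circ\pi_k}_K\leq\norm{P}_K$, and \eqref{eq:partial-bound} follows by the triangle inequality. The case $n=0$ is trivial, so assume $n\geq1$ and put $k=\ellength(\gamma)$.

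\textbf{Step 1: describe $\{\beta\preceq\gamma\}$ combinatorially.} Only $\beta$ with $\abs{\beta}=n$ and $\ellength(\beta)\leq k$ occur in \eqref{eq:Sgamma-definition}. For such $\beta$, I claim that $\beta\preceq\gamma$ if and only if either $\beta=\gamma$, or at $j=\max\{i:\beta_i\neq\gamma_i\}$ (necessarily $j\leq k$) one has $\beta_j<\gamma_j$. If $\ellength(\beta)<k$, the largest differing index is $k$, and there $\beta_k=0<\gamma_k$, so this agrees with clause (i) of the square order. If $\ellength(\beta)=k$, it is exactly clause (ii).

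This yields a disjoint decomposition of $\{\beta\preceq\gamma\}$ into the singleton $\{\gamma\}$ and the classes
\[
 B_{j,r}=\{\beta:\ \beta_i=\gamma_i\ (j<i\leq k),\ \beta_j=r\},\qquad 1\leq j\leq k,\ 0\leq r<\gamma_j .
\]
The number of pairs $(j,r)$ is $\sum_j\gamma_j=n$, so there are $n+1$ pieces in total.

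\textbf{Step 2: realize each piece as a composition of projections.} For a homogeneous polynomial depending only on the first $k$ coordinates, $\Delta_{i,s}$ keeps exactly the monomials with $i$th exponent $s$. Therefore
\[
 \sum_{\beta\in B_{j,r}}a_\beta(P)e^{*\beta}
 =\Delta_{j+1,\gamma_{j+1}}\cdots\Delta_{k,\gamma_k}\Delta_{j,r}(P\circ\pi_k),
\]
and
\[
 a_\gamma(P)e^{*\gamma}=\Delta_{1,\gamma_1}\cdots\Delta_{k,\gamma_k}(P\circ\pi_k).
\]
To justify these identities I will check that $\Delta_{i,s}$ acts termwise on the finite expansion of $P\circ\pi_k$. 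This is immediate from $\frac{1}{2\pi}\int_0^{2\pi}\e^{i(\beta_i-s)\theta}\dd\theta=\delta_{\beta_i,s}$.

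In each composition the coordinates involved are distinct. Lemma~\ref{lem:fourier-contraction} therefore makes each piece have $K$-norm at most $\norm{P\circ\pi_k}_K\leq\norm{P}_K$. Summing the $n+1$ pieces gives \eqref{eq:partial-bound}.

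\textbf{Step 3: deduce \eqref{eq:basis-constant}.} Given $s<t$ and scalars $a_1,\dots,a_t$, set $P=\sum_{m\leq t}a_mP_{n,m}$, a polynomial in finitely many coordinates, and let $\gamma$ be the index of $P_{n,s}$. The monomials $P_{n,m}$ with $m\leq t$ and $\ellength\leq k$ are unchanged by composition with $\pi_k$, while those of larger length vanish on $\pi_k(X)$. All $P_{n,m}$ with $m\leq s$ have length at most $k$, and by uniqueness of finite-dimensional coefficients
\[
 S_\gamma P=\sum_{m\leq s}a_mP_{n,m}.
\]
Applying \eqref{eq:partial-bound} then gives $c_{K,n}\leq n+1$.

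The main obstacle is Step 1. One must get the reverse-lexicographic bookkeeping exactly right, in particular that multi-indices of smaller length are absorbed into the $j=k$ classes with $r<\gamma_k$. The count of $n+1$ pieces depends on this, rather than the cruder count of one piece per coordinate.
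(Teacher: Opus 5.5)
Your proposal is correct and follows the paper's proof essentially step for step. It uses the same reduction to $Q=P\circ\pi_k$ via $\pi_k(K)\subset K$, and the same partition of $\{\beta\preceq\gamma\}$ into $\{\gamma\}$ and the $n$ classes indexed by $(j,r)$ with $r<\gamma_j$. You realize each class by a composition of Fourier projections in distinct coordinates and bound it with Lemma~\ref{lem:fourier-contraction}, exactly as the paper does. The differences are cosmetic: you apply $\Delta_{j,r}$ first rather than last, which is harmless because these projections commute and the lemma covers any order, and you spell out the coefficient bookkeeping in the final step in more detail.
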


\begin{proof}
The case $n=0$ is immediate. Assume $n\geq1$, write $k=\ellength(\gamma)$, and set
\[
 Q=P\circ\pi_k.
\]
Since $\pi_k(K)\subset K$, one has $\norm{Q}_K\leq\norm{P}_K$. Moreover, every multi-index $\beta\preceq\gamma$ satisfies $\ellength(\beta)\leq k$. Thus every monomial occurring in $S_\gamma P$ depends only on the first $k$ coordinates, and the corresponding coefficients of $P$ and $P\circ\pi_k$ coincide. Consequently,
\begin{equation}\label{eq:truncate-partial-sum}
 S_\gamma P=S_\gamma(P\circ\pi_k)=S_\gamma Q.
\end{equation}

For $1\leq j\leq k$ and $0\leq r<\gamma_j$, define
\[
 T_{j,r}^{\gamma}
 =\Delta_{j,r}\Delta_{j+1,\gamma_{j+1}}\cdots\Delta_{k,\gamma_k},
\]
where the product after $\Delta_{j,r}$ is omitted when $j=k$. Also set
\[
 T_\gamma=\Delta_{1,\gamma_1}\cdots\Delta_{k,\gamma_k}.
\]
We claim that
\begin{equation}\label{eq:partition}
 S_\gamma P
 =\sum_{j=1}^{k}\sum_{r=0}^{\gamma_j-1}T_{j,r}^{\gamma}Q
 +T_\gamma Q.
\end{equation}

To verify this identity, consider the finite initial set
\[
 \mathcal I_\gamma
 =\{\beta\in\Nzero^{(\N)}:\abs{\beta}=n,
 \ \beta\preceq\gamma\}.
\]
It admits the disjoint decomposition
\begin{equation}\label{eq:index-partition}
 \mathcal I_\gamma
 =\{\gamma\}
 \mathbin{\dot\cup}
 \bigcup_{j=1}^{k}\ \bigcup_{r=0}^{\gamma_j-1}
 \bigl\{\beta:\abs{\beta}=n,
 \ \beta_i=\gamma_i\ (i>j),\ \beta_j=r\bigr\}.
\end{equation}
Indeed, if $\beta\prec\gamma$, let
\[
 j=\max\{i:\beta_i\neq\gamma_i\}.
\]
By the definition of the square order, $\beta_j<\gamma_j$ and $\beta_i=\gamma_i$ for every $i>j$, so $\beta$ belongs to exactly one set on the right-hand side of \eqref{eq:index-partition}. Conversely, every multi-index in one of those sets precedes $\gamma$. The operator $T_\gamma$ selects the monomial indexed by $\gamma$, whereas $T_{j,r}^{\gamma}$ selects precisely the monomials in the corresponding set of \eqref{eq:index-partition}. Together with \eqref{eq:truncate-partial-sum}, this proves \eqref{eq:partition}.

The number of terms on the right-hand side of \eqref{eq:partition} is
\[
 1+\sum_{j=1}^{k}\gamma_j=n+1.
\]
By Lemma~\ref{lem:fourier-contraction}, each operator in \eqref{eq:partition} is a contraction for $\norm{\cdot}_K$. Thus
\[
 \norm{S_\gamma P}_K
 \leq(n+1)\norm{Q}_K
 \leq(n+1)\norm{P}_K,
\]
which proves \eqref{eq:partial-bound}. To obtain the basis-constant estimate explicitly, let $s<t$ and put
\[
 P=\sum_{m=1}^{t}a_mP_{n,m}.
\]
If $P_{n,s}=e^{*\gamma}$, then
\[
 S_\gamma P=\sum_{m=1}^{s}a_mP_{n,m}.
\]
Hence \eqref{eq:partial-bound} gives
\[
 \norm{\sum_{m=1}^{s}a_mP_{n,m}}_K
 \leq(n+1)
 \norm{\sum_{m=1}^{t}a_mP_{n,m}}_K,
\]
and therefore $c_{K,n}\leq n+1$.
\end{proof}

\begin{corollary}\label{cor:fixed-degree}
For every $n\in\Nzero$, the degree-$n$ monomials in the square order form a Schauder basis of $(\calP({}^nX),\tau_0)$.
\end{corollary}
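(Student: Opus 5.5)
We will show that every $P\in\calP({}^nX)$ has a unique expansion in the degree-$n$ monomials in the square order, converging in $\tau_0$. The working hypothesis is that $X$ has an unconditional basis, which after renorming is $1$-unconditional; the renorming does not change $\tau_0$ or the monomials. By Proposition~\ref{prop:fundamental-compacts}, the seminorms $\norm{\cdot}_{K_\eta}$ generate $\tau_0$ on $\calP({}^nX)$. So it suffices to prove the following: for each fixed $\eta$ and each $P$, the square partial sums $S_\gamma P$, taken along the square order, converge to $P$ in $\norm{\cdot}_{K_\eta}$. Uniqueness of coefficients follows from uniqueness of finite-dimensional monomial expansions. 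If $P=\sum_\beta b_\beta e^{*\beta}$ converges in $\tau_0$, then composing with $\pi_k$ and applying the Fourier projections $\Delta_{j,r}$, which are $\tau_0$-continuous by Lemma~\ref{lem:fourier-contraction}, forces $b_\beta=a_\beta(P)$.

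For convergence we combine a density argument with the uniform bound \eqref{eq:partial-bound}. Write $K=K_\eta$. First, $P\circ\pi_k\to P$ uniformly on $K$. Indeed, $P$ is uniformly continuous on a bounded neighbourhood of the compact set $K$, and $\sup_{x\in K}\norm{x-\pi_kx}\le\eta_k\to0$; alternatively, polarization gives $\abs{P(x)-P(\pi_kx)}\le C n\norm{P}\,\eta_0^{\,n-1}\eta_k$. Each $P\circ\pi_k$ is a finite linear combination of monomials of length at most $k$. Moreover, if $\gamma$ is the last degree-$n$ multi-index with $\ellength(\gamma)\le k$, namely $\gamma=ne_k$, then $S_\gamma P=P\circ\pi_k$ exactly.

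Now let $\varepsilon>0$ and choose $k$ with $\norm{P-P\circ\pi_k}_K<\varepsilon$. For any $\gamma\succeq ne_k$ in the square order, the polynomial $P\circ\pi_k$ is fixed by $S_\gamma$, since all of its monomials precede $\gamma$. By linearity and \eqref{eq:partial-bound},
\[
\norm{P-S_\gamma P}_K\le\norm{P-P\circ\pi_k}_K+\norm{S_\gamma(P\circ\pi_k-P)}_K\le(n+2)\varepsilon .
\]
Since the monomials preceding $ne_k$ form a finite initial segment, this shows that the partial sums converge to $P$ in $\norm{\cdot}_K$. Continuity of the coefficient functionals $P\mapsto a_\beta(P)$ in $\tau_0$ follows from the same Fourier-projection formula, using the $\tau_0$-continuity of the $\Delta_{j,r}$ and of composition with $\pi_k$. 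Hence the monomials form a Schauder basis.

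The main point requiring care is the first density step: that $P\circ\pi_k\to P$ uniformly on each $K_\eta$. This is where compactness enters, through the tail bound $\eta_k\to0$. The rest reduces directly to Proposition~\ref{prop:n+1}, and the fact that $S_\gamma$ is linear and acts as the identity on polynomials supported before $\gamma$ is immediate from \eqref{eq:Sgamma-definition}.
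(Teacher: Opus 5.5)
Your proposal is correct and follows essentially the same route as the paper. Both arguments approximate $P$ uniformly on $K_\eta$ by $P\circ\pi_k$ using the polarization/tail estimate, note that every square partial sum from $ne_k$ onwards fixes $P\circ\pi_k$, and use Proposition~\ref{prop:n+1} to obtain $\norm{P-S_\gamma P}_{K_\eta}\le(n+2)\varepsilon$. The only cosmetic difference is that you derive uniqueness and continuity of the coefficient functionals from $\pi_k$ and the Fourier projections $\Delta_{j,r}$, while the paper uses restriction to $\pi_N(X)$ and the Cauchy estimate on a polydisc $D_r$; the Cauchy estimate is the same torus integral.
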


\begin{proof}
The case $n=0$ is immediate, since the only degree-zero monomial is the constant function $1$. Assume $n\geq1$. Fix a compact set $K\subset X$. By Proposition~\ref{prop:fundamental-compacts}, $K$ is contained in some compact solid set $K_\eta$, so it is enough to prove convergence uniformly on $K_\eta$.

Let $P\in\calP({}^nX)$ and let $\check P$ be its associated continuous symmetric $n$-linear form. Since $K_\eta$ is bounded and $\pi_N\to\id$ uniformly on $K_\eta$, the identity
\[
 P(x)-P(y)
 =\sum_{j=1}^{n}
 \check P(\underbrace{x,\ldots,x}_{n-j},x-y,
          \underbrace{y,\ldots,y}_{j-1})
\]
shows that
\[
 \norm{P-P\circ\pi_N}_{K_\eta}\longrightarrow0.
\]
The polynomial $P\circ\pi_N$ depends on only finitely many coordinates and is therefore a finite linear combination of degree-$n$ monomials. If $S_m$ denotes the $m$th square partial-sum operator, then for all sufficiently large $m$,
\[
 S_m(P\circ\pi_N)=P\circ\pi_N.
\]
Let $\varepsilon>0$. Choose $N$ so large that
\[
 (n+2)\norm{P-P\circ\pi_N}_{K_\eta}<\varepsilon.
\]
Then choose $m_0$ such that $S_m(P\circ\pi_N)=P\circ\pi_N$ for every $m\geq m_0$. By Proposition~\ref{prop:n+1}, for $m\geq m_0$,
\begin{align*}
 \norm{S_mP-P}_{K_\eta}
 &\leq \norm{S_m(P-P\circ\pi_N)}_{K_\eta}
       +\norm{P\circ\pi_N-P}_{K_\eta}\\
 &\leq(n+2)\norm{P-P\circ\pi_N}_{K_\eta}
 <\varepsilon.
\end{align*}
Hence the monomial partial sums converge to $P$ in $\tau_0$. To see that the coefficient functionals are continuous, fix a degree-$n$ multi-index $\alpha$ with $k=\ellength(\alpha)$ and positive numbers $r_1,\ldots,r_k$. The set
\[
 D_r=\left\{\sum_{j=1}^{k}z_je_j:\abs{z_j}\leq r_j,
 \ 1\leq j\leq k\right\}
\]
is compact. The finite-dimensional Cauchy formula gives, for the coefficient $a_\alpha(P)$ of $e^{*\alpha}$,
\[
 \abs{a_\alpha(P)}
 \leq \frac{\norm{P}_{D_r}}{r_1^{\alpha_1}\cdots r_k^{\alpha_k}}.
\]
Thus every monomial coefficient functional is $\tau_0$-continuous. Uniqueness follows by restricting $P$ to the finite-dimensional spaces $\pi_N(X)$.
\end{proof}

\section{The monomial basis of \texorpdfstring{$\calH(X)$}{H(X)}}\label{sec:main}

From now on, $X$ has a $1$-unconditional Schauder basis. Let $f\in\calH(X)$ have Taylor expansion $f=\sum_{n=0}^{\infty}P_n$. For $\eta\in c_0^+(\Nzero)$, define
\begin{equation}\label{eq:qeta}
 q_\eta(f)=\sum_{n=0}^{\infty}\norm{P_n}_{K_\eta}.
\end{equation}

\begin{proposition}\label{prop:qeta}
The family $(q_\eta)_{\eta\in c_0^+(\Nzero)}$ is a fundamental system of finite-valued seminorms for $(\calH(X),\tau_0)$. Moreover, the Taylor expansion gives the unique homogeneous decomposition
\[
 \calH(X)=\sum_{n=0}^{\infty}\calP({}^nX).
\]
This decomposition is $S^*$-absolute with respect to these seminorms, and
\[
 q_\eta\left(\sum_{n=0}^{\infty}P_n\right)
 =\sum_{n=0}^{\infty}q_\eta(P_n).
\]
\end{proposition}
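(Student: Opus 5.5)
The plan has four parts: show $q_\eta$ is finite, show the family $(q_\eta)$ generates $\tau_0$, verify the additivity identity, and then prove $S^*$-absoluteness. All of them rest on the scaling identity \eqref{eq:scaling} combined with homogeneity.

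\textbf{Finiteness.} For a fixed $\eta$, the set $K_{2\eta}=2K_\eta$ is compact by Proposition~\ref{prop:fundamental-compacts}. The Cauchy inequalities along complex lines give $\norm{P_n}_{K}\le \norm{f}_{\lambda K}/\lambda^n$ for any balanced $K$ and $\lambda>0$, which I obtain by integrating $f(\zeta x)\zeta^{-n-1}$ over $\abs{\zeta}=\lambda$. Hence
\[
 \norm{P_n}_{K_\eta}=2^{-n}\norm{P_n}_{K_{2\eta}}\le 2^{-n}\norm{f}_{K_{2\eta}} .
\]
Summing over $n$ gives $q_\eta(f)\le 2\norm{f}_{K_{2\eta}}<\infty$. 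The same inequality shows that each $q_\eta$ is $\tau_0$-continuous.

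\textbf{Generating $\tau_0$.} By the triangle inequality, $\norm{f}_{K_\eta}\le q_\eta(f)$. Every compact set lies in some $K_\eta$ by Proposition~\ref{prop:fundamental-compacts}, so the seminorms $q_\eta$ dominate all seminorms $\norm{\cdot}_K$. Together with the continuity just shown, the family $(q_\eta)$ is fundamental for $\tau_0$. Each $q_\eta$ is clearly a seminorm, since taking Taylor coefficients is linear.

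\textbf{Decomposition and additivity.} Existence of the decomposition is the Taylor expansion recalled in Section~\ref{sec:preliminaries}, which converges in $\tau_0$. For uniqueness, suppose $\sum Q_n=0$ with $Q_n\in\calP({}^nX)$ and $\tau_0$-convergence. Evaluating at $\zeta x$ produces a power series in $\zeta$ that vanishes identically, so every $Q_n(x)=0$. The additivity identity is immediate: the Taylor component of $P_n$ is $P_n$ itself, so $q_\eta(P_n)=\norm{P_n}_{K_\eta}$.

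\textbf{$S^*$-absoluteness.} Take $(a_n)$ with $\limsup\abs{a_n}^{1/n}<\infty$, and choose $\rho\ge1$ and $C>0$ with $\abs{a_n}\le C\rho^n$. Then
\[
 \sum_n\abs{a_n}q_\eta(P_n)\le C\sum_n\norm{P_n}_{K_{\rho\eta}}\le 2C\norm{f}_{K_{2\rho\eta}},
\]
by applying the bound from the finiteness step at the compact set $K_{2\rho\eta}$. The left side is therefore a finite seminorm continuous for $\tau_0$, as required.

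The main obstacle, though a mild one, is making sure the Cauchy estimate gives the uniform factor $\lambda^{-n}$ on the whole set. This requires the set to be balanced, which holds because $K_\eta$ is absolutely convex. After that, the scaling relation $rK_\eta=K_{r\eta}$ keeps all the enlarged sets inside the family, and the remaining steps are routine.
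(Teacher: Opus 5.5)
Your proposal is correct and matches the paper's proof. Both use Cauchy estimates on the balanced sets $K_\eta$ together with the scaling $rK_\eta=K_{r\eta}$ to obtain $\norm{f}_{K_\eta}\le q_\eta(f)\le C\norm{f}_{K_{\rho\eta}}$, and then the same estimate gives the $S^*$-absolute bound. The only difference is bookkeeping: you fix the radius $2$ and absorb $\abs{a_n}\le C\rho^n$ by rescaling to $K_{\rho\eta}$, whereas the paper keeps a general $\rho>L$ and sums the geometric series in $A/\rho$.
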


\begin{proof}
Fix $\eta\in c_0^+(\Nzero)$ and $\rho>1$. By \eqref{eq:scaling}, $\rho K_\eta=K_{\rho\eta}$. For $x\in K_\eta$, the one-variable function $\lambda\mapsto f(\lambda x)$ is entire and its $n$th Taylor coefficient is $P_n(x)$. Cauchy's estimate on the circle $\abs{\lambda}=\rho$ gives
\[
 \abs{P_n(x)}
 \leq \rho^{-n}\norm{f}_{K_{\rho\eta}}.
\]
Therefore the series defining $q_\eta(f)$ converges and
\begin{equation}\label{eq:q-upper}
 q_\eta(f)
 \leq\sum_{n=0}^{\infty}\rho^{-n}\norm{f}_{K_{\rho\eta}}
 =\frac{\rho}{\rho-1}\norm{f}_{K_{\rho\eta}}.
\end{equation}
On the other hand, uniform convergence of the Taylor series on $K_\eta$ yields
\begin{equation}\label{eq:q-lower}
 \norm{f}_{K_\eta}\leq q_\eta(f).
\end{equation}
Since the sets $K_\eta$ form a fundamental system of compact subsets, \eqref{eq:q-upper} and \eqref{eq:q-lower} show that the seminorms $q_\eta$ generate $\tau_0$.

The uniqueness of the homogeneous decomposition is the uniqueness of the Taylor expansion at the origin, and the displayed additivity is immediate from the definition. To verify the $S^*$-absolute property, let $(a_n)$ be a scalar sequence such that
\[
 L=\limsup_{n\to\infty}\abs{a_n}^{1/n}<\infty.
\]
Choose numbers $A$ and $\rho$ with $L<A<\rho$. There is $C>0$ such that $\abs{a_n}\leq CA^n$ for every $n$. Define
\[
 q_{\eta,a}(f)=\sum_{n=0}^{\infty}\abs{a_n}\norm{P_n}_{K_\eta}.
\]
By the same Cauchy estimate,
\begin{align*}
 q_{\eta,a}(f)
 &\leq C\sum_{n=0}^{\infty}\left(\frac{A}{\rho}\right)^n
       \norm{f}_{K_{\rho\eta}}\\
 &=\frac{C}{1-A/\rho}\norm{f}_{K_{\rho\eta}}.
\end{align*}
Thus $q_{\eta,a}$ is a finite-valued $\tau_0$-continuous seminorm, proving that the homogeneous decomposition is $S^*$-absolute.
\end{proof}

\begin{theorem}\label{thm:main-1unconditional}
Let $X$ be a complex Banach space with a $1$-unconditional Schauder basis. Then the associated monomials, with the square order in each homogeneous degree and with any compatible global ordering, form a Schauder basis of $(\calH(X),\tau_0)$.
\end{theorem}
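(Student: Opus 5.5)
The plan is to apply Theorem~\ref{thm:DM} with $E=(\calH(X),\tau_0)$, $E_n=\calP({}^nX)$ and $\mathcal Q=\{q_\eta:\eta\in c_0^+(\Nzero)\}$, and to check its hypotheses one at a time using the results already established.

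\textbf{Decomposition hypotheses.} By Proposition~\ref{prop:qeta}, the Taylor expansion $\calH(X)=\sum_{n\ge0}\calP({}^nX)$ is a decomposition. It is $S^*$-absolute with respect to the fundamental family $(q_\eta)$. These seminorms satisfy $q_\eta\bigl(\sum_n P_n\bigr)=\sum_n q_\eta(P_n)$. For a single homogeneous polynomial $P\in\calP({}^nX)$ the Taylor expansion has only one term, so $q_\eta(P)=\norm{P}_{K_\eta}$. Hence on $E_n$ the seminorm $q_\eta$ is exactly the sup-seminorm on the compact solid set $K_\eta$.

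\textbf{Bases of the components.} By Corollary~\ref{cor:fixed-degree}, each $E_n$ has the degree-$n$ monomials in the square order as a Schauder basis, say $(u_{n,m})_m=(P_{n,m})_m$. One point needs care: Theorem~\ref{thm:DM} asks for the basis in $E_n$ with the topology induced from $E$. I must check that $\tau_0$ restricted to $\calP({}^nX)$ is the compact-open topology on $\calP({}^nX)$, which is immediate since both are given by sup-seminorms over compact sets.

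\textbf{Growth of the basis constants.} The constant $c_{q_\eta,n}$ of Theorem~\ref{thm:DM} is defined by the same inequality as \eqref{eq:cKn-definition}, with $q_\eta=\norm{\cdot}_{K_\eta}$ on $E_n$. It therefore equals $c_{K_\eta,n}$. By Proposition~\ref{prop:fundamental-compacts}, $K_\eta$ is bounded and solid, so Proposition~\ref{prop:n+1} gives $c_{q_\eta,n}\le n+1$. Consequently
\[
\limsup_{n\to\infty}c_{q_\eta,n}^{1/n}\le\lim_{n\to\infty}(n+1)^{1/n}=1<\infty .
\]
(In degree $0$ the constant is trivially $1$, matching Remark~\ref{rem:DM-indexing}.) Theorem~\ref{thm:DM} then yields that the family $(P_{n,m})_{n,m}$, in every compatible ordering, is a Schauder basis of $(\calH(X),\tau_0)$.

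\textbf{Main obstacle.} The substantive work is already contained in the earlier propositions, so there is no real obstacle left. The only delicate step is the bookkeeping that identifies the abstract constants $c_{q,n}$ with $c_{K_\eta,n}$. This rests on $q_\eta$ restricting to $\norm{\cdot}_{K_\eta}$ on each homogeneous component, and on the fact that "compatible ordering" has the same meaning in Theorem~\ref{thm:DM} as in the statement. Both points reduce to the definitions.
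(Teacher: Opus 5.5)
Your proposal is correct and follows essentially the same route as the paper: Proposition~\ref{prop:qeta} supplies the $S^*$-absolute decomposition and the additive seminorms, Corollary~\ref{cor:fixed-degree} gives the bases of the components, and Proposition~\ref{prop:n+1} gives $c_{q_\eta,n}=c_{K_\eta,n}\le n+1$, after which Theorem~\ref{thm:DM} concludes. Your additional checks (the induced topology on $\calP({}^nX)$ and the degree-zero component) are harmless bookkeeping that the paper handles implicitly and in Remark~\ref{rem:DM-indexing}.
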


\begin{proof}
By Corollary~\ref{cor:fixed-degree}, the square-ordered monomials form a Schauder basis in every homogeneous component $\calP({}^nX)$. Proposition~\ref{prop:qeta} provides an $S^*$-absolute homogeneous decomposition and a fundamental family of additive seminorms. On $\calP({}^nX)$, the restriction of $q_\eta$ is exactly $\norm{\cdot}_{K_\eta}$. Hence Proposition~\ref{prop:n+1} gives
\[
 c_{q_\eta,n}=c_{K_\eta,n}\leq n+1.
\]
Consequently,
\[
 \limsup_{n\to\infty}c_{q_\eta,n}^{1/n}
 \leq\lim_{n\to\infty}(n+1)^{1/n}=1.
\]
All hypotheses of the abstract criterion \cite[Theorem~1]{DineenMujica} are therefore satisfied, and the conclusion follows from Theorem~\ref{thm:DM}.
\end{proof}

\begin{corollary}\label{cor:unconditional-renorming}
Let $X$ be a complex Banach space with an unconditional Schauder basis. Then the monomials form a Schauder basis of $(\calH(X),\tau_0)$ in every compatible ordering.
\end{corollary}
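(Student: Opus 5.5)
The plan is to reduce Corollary~\ref{cor:unconditional-renorming} to Theorem~\ref{thm:main-1unconditional} by an equivalent renorming. Let $(e_j)$ be the given unconditional basis of $X$. By unconditionality, the constant
\[
 M=\sup\Bigl\{\norm{\textstyle\sum_j\lambda_jx_je_j}:\abs{\lambda_j}\leq1,\ \norm{\textstyle\sum_jx_je_j}\leq1\Bigr\}
\]
is finite. (For complex scalars, the real suppression/sign constant controls complex multipliers up to a fixed factor.) Define
\[
 \norm{x}'=\sup\Bigl\{\norm{\textstyle\sum_j\lambda_jx_je_j}:\abs{\lambda_j}\leq1\text{ for all }j\Bigr\}.
\]
Taking $\lambda_j=1$ gives $\norm{x}\leq\norm{x}'$, and the definition of $M$ gives $\norm{x}'\leq M\norm{x}$. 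So $\norm{\cdot}'$ is an equivalent norm. It is $1$-unconditional for $(e_j)$, because multiplying by $(\mu_j)$ with $\abs{\mu_j}\leq1$ and then by $(\lambda_j)$ amounts to multiplying by $(\lambda_j\mu_j)$, which again has modulus at most $1$. Hence $(e_j)$ is a $1$-unconditional Schauder basis of $X'=(X,\norm{\cdot}')$. The basis vectors, the coefficient functionals $e_j^*$, and therefore the monomials $e^{*\alpha}$ are all unchanged.

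The key observation is that the data in the conclusion do not depend on the norm within its equivalence class. The topology of $X$ is unchanged, so:
\begin{itemize}
\item the compact subsets of $X$ and $X'$ coincide;
\item the entire holomorphic functions on $X$ and $X'$ coincide, since holomorphy is a topological-linear notion (Gâteaux holomorphy plus continuity);
\item the compact-open topology $\tau_0$ on $\calH(X)=\calH(X')$ is the same, being defined by suprema over the same family of compact sets;
\item the square order and the notion of compatible ordering depend only on the multi-indices.
\end{itemize}

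Applying Theorem~\ref{thm:main-1unconditional} to $X'$ then shows that the monomials, in every compatible ordering, form a Schauder basis of $(\calH(X'),\tau_0)=(\calH(X),\tau_0)$. This is the claim.

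I expect the only point needing care is that $M<\infty$ for complex scalars. This is standard. Write $\lambda_j=\operatorname{Re}\lambda_j+i\operatorname{Im}\lambda_j$. Each real multiplier sequence in $[-1,1]$ lies in the closed convex hull of sign sequences, and for sign multipliers boundedness follows from unconditional convergence via the uniform boundedness principle. This yields $M\leq 2K$, where $K$ is the unconditional constant for signs. Everything else is a direct substitution into the theorem already proved.
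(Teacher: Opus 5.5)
Your proposal is correct and follows the paper's proof: renorm so that the basis becomes $1$-unconditional, note that equivalent norms leave the compact sets, $\calH(X)$, $\tau_0$ and the monomials unchanged, and apply Theorem~\ref{thm:main-1unconditional}. The only difference is that you construct the renorming $\norm{x}'=\sup_{\abs{\lambda_j}\leq1}\norm{\sum_j\lambda_jx_je_j}$ explicitly and check that it works, where the paper simply cites \cite[Proposition~3.1.3]{AlbiacKalton}.
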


\begin{proof}
An unconditional basis admits an equivalent norm for which it is $1$-unconditional; see, for example, \cite[Proposition~3.1.3]{AlbiacKalton}. Equivalent norms define the same compact subsets, the same holomorphic mappings and the same compact-open topology. Theorem~\ref{thm:main-1unconditional} therefore applies.
\end{proof}

\section{Applications and examples}\label{sec:applications}

The abstract form of Theorem~\ref{thm:main-1unconditional} applies to a large class of classical and nonclassical Banach spaces. We first record a general consequence for sequence lattices.

\begin{corollary}\label{cor:sequence-lattices}
Let $E$ be a complex Banach space of scalar sequences such that $c_{00}\subset E$ is dense and such that coordinatewise domination implies norm domination: whenever $x=(x_j)\in E$ and $y=(y_j)$ satisfies
\[
 \abs{y_j}\leq \abs{x_j}\qquad(j\in\N),
\]
then $y\in E$ and $\norm{y}_E\leq\norm{x}_E$. Then the canonical unit vectors form a $1$-unconditional Schauder basis of $E$. Consequently, the associated monomials, with the square order in each homogeneous degree and any compatible global ordering, form a Schauder basis of
\[
 (\calH(E),\tau_0).
\]
\end{corollary}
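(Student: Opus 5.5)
The plan is to check that the canonical unit vectors $(e_j)$ form a $1$-unconditional Schauder basis of $E$, and then apply Theorem~\ref{thm:main-1unconditional} directly.

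\emph{Step 1: coordinate functionals are bounded.} For $x\in E$ and fixed $j$, the sequence $x_je_j$ is coordinatewise dominated by $x$. The lattice hypothesis gives $\abs{x_j}\,\norm{e_j}_E\leq\norm{x}_E$. Since $e_j\in c_{00}\subset E$ is nonzero, $\norm{e_j}_E>0$, so $x\mapsto x_j$ is continuous with norm at most $1/\norm{e_j}_E$.

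\emph{Step 2: uniform bounds.} Fix scalars $\lambda_j$ with $\abs{\lambda_j}\leq1$ and let $x\in E$. The sequence $(\lambda_jx_j)$ is coordinatewise dominated by $x$, so it lies in $E$ and $\norm{(\lambda_jx_j)}_E\leq\norm{x}_E$. Taking $\lambda_j\in\{0,1\}$ shows that the truncations $\pi_Nx=\sum_{j\le N}x_je_j$ satisfy $\norm{\pi_Nx}_E\leq\norm{x}_E$, and likewise $\norm{(\id-\pi_N)x}_E\leq\norm{x}_E$.

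\emph{Step 3: convergence of expansions.} This is the only point needing care, because the lattice property alone does not give order continuity. Density of $c_{00}$ supplies it. Given $x\in E$ and $\varepsilon>0$, choose $y\in c_{00}$ with $\norm{x-y}_E<\varepsilon$, and let $N_0$ bound the support of $y$. For $N\geq N_0$ we have $\pi_Ny=y$, so
\[
 \norm{x-\pi_Nx}_E=\norm{(\id-\pi_N)(x-y)}_E\leq\norm{x-y}_E<\varepsilon,
\]
using the contraction bound from Step~2. Hence $x=\sum_j x_je_j$ in norm.

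\emph{Step 4: uniqueness and conclusion.} The expansion is unique because, by Step~1, the coordinate functionals are continuous. Thus $(e_j)$ is a Schauder basis with coefficient functionals $e_j^*(x)=x_j$. Step~2 applied to $\sum_j x_je_j$ is exactly the $1$-unconditionality inequality. Theorem~\ref{thm:main-1unconditional} then gives the monomial basis of $(\calH(E),\tau_0)$ in every compatible ordering.
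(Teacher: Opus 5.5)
Your proof is correct and follows essentially the same route as the paper: the lattice hypothesis makes the diagonal operators and the projections $\pi_N$, $\id-\pi_N$ contractive; density of $c_{00}$ together with $\norm{(\id-\pi_N)(x-y)}_E\leq\norm{x-y}_E$ gives $\pi_Nx\to x$; and Theorem~\ref{thm:main-1unconditional} then applies. Your Step~1 (boundedness of the coordinate functionals, hence uniqueness of the expansion) makes explicit a point that the paper leaves implicit.
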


\begin{proof}
For $A\subset\N$, let
\[
 P_Ax=\sum_{j\in A}x_je_j.
\]
Since $\abs{(P_Ax)_j}\leq\abs{x_j}$ for every $j$, the lattice property gives
\[
 \norm{P_Ax}_E\leq\norm{x}_E.
\]
More generally, if $(\lambda_j)\in\ell_\infty$ and $\abs{\lambda_j}\leq1$ for all $j$, then
\[
 \norm{(\lambda_jx_j)_j}_E\leq\norm{x}_E.
\]
Thus the canonical vectors are $1$-unconditional.

Let $\pi_N=P_{\{1,\ldots,N\}}$. Given $x\in E$ and $\varepsilon>0$, choose $y\in c_{00}$ with $\norm{x-y}_E<\varepsilon$. For all sufficiently large $N$, $\pi_Ny=y$, and hence
\[
 \norm{x-\pi_Nx}_E
 =\norm{(\id-\pi_N)(x-y)}_E
 \leq\norm{x-y}_E
 <\varepsilon.
\]
Therefore the canonical vectors form a Schauder basis. The conclusion follows from Theorem~\ref{thm:main-1unconditional}.
\end{proof}

\begin{example}\label{ex:sequence-lattice-examples}
Corollary~\ref{cor:sequence-lattices} contains, among others, the following familiar families.
\begin{enumerate}
 \item The classical spaces $c_0$ and $\ell_p$, $1\leq p<\infty$.

 \item Weighted sequence spaces
 \[
  \ell_p(v)=\left\{x:\sum_{j=1}^{\infty}\abs{x_j}^pv_j<\infty\right\},
  \qquad 1\leq p<\infty,
 \]
 for positive weights $v=(v_j)$, and the corresponding weighted $c_0$ spaces
 \[
  c_0(v)=\{x:v_j\abs{x_j}\to0\},
  \qquad \norm{x}_{c_0(v)}=\sup_jv_j\abs{x_j}.
 \]

 \item Lorentz sequence spaces $d(w,p)$, $1\leq p<\infty$, endowed with
 \[
  \norm{x}_{d(w,p)}
  =\left(\sum_{j=1}^{\infty}(x_j^*)^pw_j\right)^{1/p},
 \]
 under the usual assumptions that $w=(w_j)$ is decreasing, positive, $w_1=1$, and $\sum_jw_j=\infty$. Their canonical basis is in fact $1$-symmetric.

 \item Orlicz hearts $h_M$, namely the closure of $c_{00}$ in the Orlicz sequence space associated with an Orlicz function $M$, equipped with its Luxemburg norm. The lattice property of that norm and the definition of $h_M$ place these spaces directly under Corollary~\ref{cor:sequence-lattices}.

 \item Schreier-type sequence spaces. If $\mathcal F$ is a hereditary family of finite subsets of $\N$ containing all singletons and $X_{\mathcal F}$ denotes the completion of $c_{00}$ for
 \[
  \norm{x}_{\mathcal F}
  =\sup_{F\in\mathcal F}\sum_{j\in F}\abs{x_j},
 \]
 then the canonical basis is $1$-unconditional. In particular, this applies to the classical Schreier space obtained from
 \[
  \mathcal S=\{F\subset\N:F\text{ finite and }\abs{F}\leq\min F\}.
 \]
\end{enumerate}
Thus each of these spaces carries the monomial Schauder basis furnished by Theorem~\ref{thm:main-1unconditional}.
\end{example}

The preceding examples can be enlarged further by passing to vector-valued sequence spaces.

\begin{proposition}\label{prop:E-sums}
Let $E$ satisfy the hypotheses of Corollary~\ref{cor:sequence-lattices}, and let $(X_j)_{j\geq1}$ be a sequence of complex Banach spaces. Assume that each $X_j$ has a $1$-unconditional Schauder basis $(e_{j,k})_{k\geq1}$. Define
\[
 X=\left(\bigoplus_{j=1}^{\infty}X_j\right)_E
 =\left\{(x_j):x_j\in X_j,\ (\norm{x_j}_{X_j})_{j\geq1}\in E\right\}
\]
with norm
\[
 \norm{(x_j)}_X
 =\norm{(\norm{x_j}_{X_j})_{j\geq1}}_E.
\]
Then the double family $(e_{j,k})_{j,k}$, viewed in the corresponding coordinate blocks of $X$, is a $1$-unconditional Schauder basis of $X$ under any enumeration of the double index set. Consequently, the associated monomials form a Schauder basis of
\[
 (\calH(X),\tau_0)
\]
for every compatible ordering.
\end{proposition}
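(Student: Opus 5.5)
The proof has three parts. First I show that $X$ is a Banach space. Then I show that the family $(e_{j,k})$ is $1$-unconditional and complete, which makes it a Schauder basis in every enumeration. Finally I apply Theorem~\ref{thm:main-1unconditional}.

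\textbf{Completeness of $X$.} The fact that $\norm{\cdot}_X$ is a norm uses two properties of $E$: its norm is monotone under coordinatewise domination, and the triangle inequality $\norm{x_j+y_j}\leq\norm{x_j}+\norm{y_j}$ holds in each block. For completeness, take a Cauchy sequence in $X$. Since $\norm{x_j}_{X_j}e_j$ is dominated by $(\norm{x_i}_{X_i})_i$, we get $\norm{x_j}_{X_j}\leq \norm{(x_i)}_X/\norm{e_j}_E$. Hence each block of the sequence is Cauchy in $X_j$ and has a limit there. To see that the assembled limit lies in $X$ and is the norm limit, I would use that $E$ has a $1$-unconditional basis (Corollary~\ref{cor:sequence-lattices}). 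This lets me control finite truncations $\pi_N$ uniformly, and then pass to the limit in the tails using density of $c_{00}$ together with the lattice property.

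\textbf{$1$-unconditionality.} Let $x=(x_j)$ and multipliers $\abs{\lambda_{j,k}}\leq1$ be given. Apply the multipliers blockwise. By $1$-unconditionality in each $X_j$, the $j$th block of the new vector has norm at most $\norm{x_j}_{X_j}$. The lattice property of $E$ then gives $\norm{(\lambda_{j,k}x_{j,k})}_X\leq\norm{x}_X$. This holds for finite sums, and also for infinite ones once convergence is established.

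\textbf{Completeness of the span.} I must show that finitely supported vectors, meaning finitely many blocks each with finitely many coordinates, are dense in $X$. Given $x$ and $\varepsilon>0$, truncate first to blocks $j\leq N$. The error is $\norm{(0,\dots,0,\norm{x_{N+1}},\dots)}_E$, which tends to $0$ by the argument in Corollary~\ref{cor:sequence-lattices}. Then approximate each of the finitely many $x_j$ by a finite basis expansion in $X_j$. For a vector supported on finitely many blocks, $\norm{y}_X\leq\sum_j\norm{y_j}_{X_j}\norm{e_j}_E$, so these finitely many small errors add up to something small.

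\textbf{Convergence in every enumeration.} Fix any enumeration of the double index set and let $P_F$ be the projection onto a finite set $F$ of indices. By unconditionality, $\norm{P_F}\leq1$ on finite sums, and $P_F$ extends to a contraction. Given $x$ and $\varepsilon$, choose a finitely supported $y$ with $\norm{x-y}<\varepsilon$. Once the partial set $F$ contains the support of $y$,
\[
\norm{x-P_Fx}\leq\norm{(\id-P_F)(x-y)}\leq 2\varepsilon .
\]
Uniqueness of coefficients comes from continuity of the coordinate functionals, and each of these is bounded through the corresponding block.

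\textbf{Conclusion and main obstacle.} So $(e_{j,k})$ is a $1$-unconditional Schauder basis in every enumeration, and Theorem~\ref{thm:main-1unconditional} yields the monomial basis of $(\calH(X),\tau_0)$. I expect the main obstacle to be the completeness step: specifically, showing that the coordinatewise limit lies in $X$ and that tails converge. Everything else is a direct application of the lattice property.
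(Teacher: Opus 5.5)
Your proposal is correct and follows essentially the same route as the paper. Finite coordinate projections (more generally, multipliers) are contractions by blockwise $1$-unconditionality combined with the lattice property of $E$. Finitely supported vectors are dense, by truncating first in blocks and then in coordinates within each block. The conclusion then follows from Theorem~\ref{thm:main-1unconditional}.

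Your added check that $X$ is complete, which the paper takes for granted, is a welcome addition. When you write it out, use that the tails of a fixed Cauchy-sequence element $x^{(m)}$ vanish in $E$; a Fatou-type argument based only on uniform bounds for $\pi_N$-truncations would fail for $E=c_0$.
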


\begin{proof}
Let $F\subset\N\times\N$ be finite, and let $P_F$ denote the corresponding coordinate projection. If $x=(x_j)\in X$ and $x_j=\sum_{k=1}^{\infty}a_{j,k}e_{j,k}$,
then, by the $1$-unconditionality in each $X_j$,
\[
 \norm{(P_Fx)_j}_{X_j}\leq\norm{x_j}_{X_j}
 \qquad(j\in\N).
\]
The lattice property of $E$ therefore gives
\[
 \norm{P_Fx}_X
 =\norm{(\norm{(P_Fx)_j}_{X_j})_j}_E
 \leq\norm{(\norm{x_j}_{X_j})_j}_E
 =\norm{x}_X.
\]
Hence all finite coordinate projections are contractions.

It remains to verify density of the finite linear span of the vectors $e_{j,k}$. Given $x=(x_j)\in X$ and $\varepsilon>0$, first choose $J$ so that
\[
 \norm{(0,\ldots,0,\norm{x_{J+1}},\norm{x_{J+2}},\ldots)}_E<\frac{\varepsilon}{2},
\]
which is possible because the canonical basis of $E$ is a Schauder basis. For each $1\leq j\leq J$, choose a finite partial sum $y_j$ of the basis expansion of $x_j$ so close to $x_j$ that
\[
 \norm{(\norm{x_1-y_1},\ldots,\norm{x_J-y_J},0,0,\ldots)}_E<\frac{\varepsilon}{2}.
\]
Then $y=(y_1,\ldots,y_J,0,\ldots)$ is a finite linear combination of the vectors $e_{j,k}$ and $\norm{x-y}_X<\varepsilon$. Thus the double family is total. Since its finite coordinate projections are contractions, it is a $1$-unconditional Schauder basis; in particular, every enumeration is a Schauder basis. The last assertion follows from Theorem~\ref{thm:main-1unconditional}.
\end{proof}

\begin{corollary}\label{cor:mixed-sums}
Let $1\leq r<\infty$, let $(p_j)_{j\geq1}\subset[1,\infty)$, and let $(d_j)_{j\geq1}\subset\N$. Then the conclusion of Theorem~\ref{thm:main-1unconditional} holds for
\[
 c_0\left(\bigoplus_{j=1}^{\infty}\ell_{p_j}^{d_j}\right)
 \qquad\text{and}\qquad
 \ell_r\left(\bigoplus_{j=1}^{\infty}\ell_{p_j}^{d_j}\right).
\]
More generally, the same conclusion holds for every $E$-sum in Proposition~\ref{prop:E-sums}.
\end{corollary}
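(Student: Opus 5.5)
The plan is to deduce Corollary~\ref{cor:mixed-sums} directly from Proposition~\ref{prop:E-sums}. For each space we identify the lattice $E$ and the summands $X_j$, and then check the hypotheses of that proposition. The last sentence of the corollary is simply a restatement of Proposition~\ref{prop:E-sums}, so only the two concrete spaces require attention.

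First, take $E=c_0$ or $E=\ell_r$ with $1\leq r<\infty$. Both appear in Example~\ref{ex:sequence-lattice-examples}(i), so each is a Banach sequence space in which $c_{00}$ is dense. Both also have the lattice property: if $\abs{y_j}\leq\abs{x_j}$ for all $j$, then $y\in E$ and $\norm{y}_E\leq\norm{x}_E$. This follows at once from the monotonicity of the sup and $\ell_r$ norms. Hence $E$ satisfies the hypotheses of Corollary~\ref{cor:sequence-lattices}.

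Second, take $X_j=\ell_{p_j}^{d_j}$ with its canonical unit vector basis $(e_{j,k})_{k=1}^{d_j}$. A finite-dimensional space has no infinite Schauder basis, so Proposition~\ref{prop:E-sums} does not literally apply to a sequence $(e_{j,k})_{k\geq1}$ here. I will handle this by observing that its proof uses only two facts: each $X_j$ has a $1$-unconditional basis, which here is finite, and finite partial sums approximate each $x_j$. Both hold trivially in finite dimensions. The canonical basis of $\ell_{p_j}^{d_j}$ is $1$-unconditional, because multiplying coordinates by scalars $\lambda_k$ with $\abs{\lambda_k}\leq1$ does not increase $\bigl(\sum_k\abs{x_k}^{p_j}\bigr)^{1/p_j}$. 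The approximation step is immediate, since $x_j$ equals its full finite expansion.

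The argument of Proposition~\ref{prop:E-sums} then applies verbatim to the countable family $\{e_{j,k}:j\in\N,\ 1\leq k\leq d_j\}$. It shows that all finite coordinate projections are contractions and that the family is total, so the family is a $1$-unconditional Schauder basis of $X$ in any enumeration. The natural enumeration is block by block. Theorem~\ref{thm:main-1unconditional} then gives the monomial basis of $(\calH(X),\tau_0)$ in every compatible ordering.

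The main obstacle is purely formal: the blocks are finite-dimensional. I would address it with a one-line remark that Proposition~\ref{prop:E-sums} and its proof remain valid when each $X_j$ has a finite $1$-unconditional basis $(e_{j,k})_{k=1}^{d_j}$, with the index set replaced by $\{(j,k):1\leq k\leq d_j\}$. Alternatively, one can check $1$-unconditionality of the concatenated basis directly. That check is a single line from the lattice property of $E$ together with the monotonicity of each $\ell_{p_j}^{d_j}$ norm.
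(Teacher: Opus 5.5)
Your proposal is correct and matches the paper's proof, which is the one-line application of Proposition~\ref{prop:E-sums} with $E=c_0$ or $E=\ell_r$ and $X_j=\ell_{p_j}^{d_j}$. You also note that the blocks carry finite bases $(e_{j,k})_{k=1}^{d_j}$, so Proposition~\ref{prop:E-sums} must be read with index set $\{(j,k):j\in\N,\ 1\leq k\leq d_j\}$ (still infinite, since every $d_j\geq1$); the paper passes over this formal point silently, and your fix of running its proof verbatim on that index set is the right one.
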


\begin{proof}
Apply Proposition~\ref{prop:E-sums} with $E=c_0$ or $E=\ell_r$ and $X_j=\ell_{p_j}^{d_j}$.
\end{proof}

We now return to the two spaces that originally motivated the present work. The first one is recovered as a particularly simple instance of the preceding vector-valued construction.

\begin{corollary}\label{cor:c0sum}
Let $1\leq p<\infty$ and
\[
 X_p=c_0\left(\bigoplus_{j=1}^{\infty}\ell_p^j\right)
 =\left\{x=(x^{(j)})_{j\geq1}:
 x^{(j)}\in\ell_p^j,
 \ \norm{x^{(j)}}_p\longrightarrow0\right\},
\]
endowed with the norm $\norm{x}=\sup_j\norm{x^{(j)}}_p$. Then the monomials associated with the canonical basis of $X_p$, with any compatible ordering, form a Schauder basis of $(\calH(X_p),\tau_0)$.
\end{corollary}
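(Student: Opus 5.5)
The plan is to obtain Corollary~\ref{cor:c0sum} as the special case of Proposition~\ref{prop:E-sums} (equivalently, of Corollary~\ref{cor:mixed-sums}) with $E=c_0$, $X_j=\ell_p^j$ and $p_j=p$, $d_j=j$. First we check the hypotheses. The space $E=c_0$ satisfies those of Corollary~\ref{cor:sequence-lattices}: $c_{00}$ is dense in $c_0$, and if $\abs{y_j}\leq\abs{x_j}$ with $x\in c_0$, then $y\in c_0$ and $\sup_j\abs{y_j}\leq\sup_j\abs{x_j}$. Each $\ell_p^j$ is finite dimensional, and its canonical basis $(e_{j,k})_{k=1}^{j}$ is $1$-unconditional, since $\norm{(\lambda_kx_k)}_p\leq\norm{x}_p$ whenever $\abs{\lambda_k}\leq1$. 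A finite basis fits Proposition~\ref{prop:E-sums}: one can read it as a sequence padded by zeros, or simply observe that the proof of that proposition only uses finite coordinate projections and density of finite partial sums. Both facts are trivial in finite dimension.

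Next we identify $X_p$ with the $E$-sum $\bigl(\bigoplus_j\ell_p^j\bigr)_{c_0}$. The norm of that sum is $\norm{(\norm{x^{(j)}}_p)_j}_{c_0}=\sup_j\norm{x^{(j)}}_p$. The membership condition $(\norm{x^{(j)}}_p)_j\in c_0$ is exactly $\norm{x^{(j)}}_p\to0$. The canonical basis of $X_p$ is the double family $(e_{j,k})_{1\leq k\leq j}$, enumerated blockwise. Proposition~\ref{prop:E-sums} then says this family is a $1$-unconditional Schauder basis in any enumeration, in particular in the canonical one.

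Finally, Theorem~\ref{thm:main-1unconditional} applied to this $1$-unconditional basis shows that the associated monomials, in any compatible ordering, form a Schauder basis of $(\calH(X_p),\tau_0)$. The statement fixes no particular enumeration of the canonical basis, and Proposition~\ref{prop:E-sums} covers every enumeration, so the conclusion holds for whichever enumeration is meant.

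The only delicate point is the bookkeeping of finite-dimensional summands inside a proposition phrased with sequences $(e_{j,k})_{k\geq1}$. I expect to handle it by the remark above, namely that nothing in that proof needs infinitely many vectors per block. If this needs more care, we can instead verify $1$-unconditionality directly: for $\abs{\lambda_{j,k}}\leq1$, blockwise $\ell_p$ contraction followed by the supremum over $j$ gives it. Density of finite combinations follows by truncating to the first $J$ blocks once $\sup_{j>J}\norm{x^{(j)}}_p<\varepsilon$. With these two facts, Theorem~\ref{thm:main-1unconditional} applies directly.
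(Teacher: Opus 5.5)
Your proposal is correct and is the paper's proof: apply Proposition~\ref{prop:E-sums} with $E=c_0$ and $X_j=\ell_p^j$, then Theorem~\ref{thm:main-1unconditional}. Your remark on the finite-dimensional blocks (padding, or noting that the proof of Proposition~\ref{prop:E-sums} only uses finite coordinate projections and density) settles a bookkeeping point the paper does not address.
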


\begin{proof}
Apply Proposition~\ref{prop:E-sums} with $E=c_0$ and $X_j=\ell_p^j$ for every $j\geq1$.
\end{proof}

Let $w=(w_j)$ be a decreasing sequence of positive numbers such that $w_1=1$, $w\in c_0\setminus\ell_1$, and put
$W_m=\sum_{j=1}^{m}w_j$.

For a scalar sequence $x$, let $(x_j^*)$ denote the decreasing rearrangement of $(\abs{x_j})$. The Lorentz predual is
\[
 d_*(w,1)
 =\left\{x\in c_0:
 \lim_{m\to\infty}\frac{\sum_{j=1}^{m}x_j^*}{W_m}=0\right\},
\]
with norm
\[
 \norm{x}_*
 =\sup_{m\geq1}\frac{\sum_{j=1}^{m}x_j^*}{W_m}.
\]

\begin{remark}\label{rem:lorentz-warning}
A rearrangement-based condition alone does not provide compactness in $d_*(w,1)$. Indeed, let $\lambda_m=1/W_m$. Since $w\notin\ell_1$, one has $W_m\to\infty$ and therefore $\lambda_m\to0$. Nevertheless, the symmetric set
\[
 A_\lambda
 =\left\{x\in d_*(w,1):
 \frac{\sum_{i=1}^{m}x_i^*}{W_m}\leq\lambda_m
 \text{ for every }m\right\}
\]
contains every unit vector $e_j$. Moreover, for $j\neq k$, $\norm{e_j-e_k}_*
 \geq \frac{(e_j-e_k)_1^*}{W_1}=1$.

Thus $A_\lambda$ is not totally bounded and hence is not relatively compact. This explains why the coordinate-tail compact sets $K_\eta$ are needed in the Lorentz-predual case.
\end{remark}

\begin{lemma}\label{lem:dstar-basis}
The canonical unit vectors form a normalized $1$-symmetric Schauder basis of $d_*(w,1)$.
\end{lemma}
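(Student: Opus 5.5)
The plan is to show that $d_*(w,1)$ is a Banach sequence space in which $c_{00}$ is dense and whose norm is symmetric and monotone under coordinatewise domination. Corollary~\ref{cor:sequence-lattices} then gives that the unit vectors form a $1$-unconditional Schauder basis. Symmetry is verified separately, and normalization is immediate from $\norm{e_j}_*=\sup_m 1/W_m=1/W_1=1$, since $W_m$ increases and $w_1=1$.

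\textbf{Lattice and symmetry properties.} The first step is to record that the norm is well defined and a norm. Set $\phi_m(x)=\sum_{j=1}^m x_j^*$. By the classical identity
\[
\phi_m(x)=\sup_{\abs{A}=m}\sum_{j\in A}\abs{x_j},
\]
each $\phi_m$ is a seminorm, so $\norm{\cdot}_*$ is a norm on $c_0$; it dominates $x_1^*=\norm{x}_\infty$. Next, if $\abs{y_j}\le\abs{x_j}$ for all $j$, then $y_j^*\le x_j^*$ for every $j$. Hence $\phi_m(y)\le\phi_m(x)$, which gives both $y\in d_*(w,1)$ (the vanishing limit condition is inherited) and $\norm{y}_*\le\norm{x}_*$. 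A permutation of coordinates does not change the decreasing rearrangement. Combined with unimodular multipliers, this yields $1$-symmetry.

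\textbf{Completeness.} Take a Cauchy sequence $(x^{(k)})$ in $d_*(w,1)$. Since $\norm{\cdot}_*$ dominates $\norm{\cdot}_\infty$, it converges in $c_0$ to some $x$. For each fixed $m$ and finite $A$, the quantity $\sum_{A}\abs{x^{(k)}_j-x_j}$ is a limit, and this gives $\norm{x^{(k)}-x}_*\le\varepsilon$ for large $k$ by the standard lower-semicontinuity argument. It remains to check that $x$ satisfies the limit condition. This follows from $\phi_m(x)/W_m\le \phi_m(x^{(k)})/W_m+\norm{x-x^{(k)}}_*$ by letting first $m\to\infty$ and then $k\to\infty$.

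\textbf{Density of $c_{00}$.} This is the main obstacle. Given $x\in d_*(w,1)$, I will show that $\norm{(\id-\pi_N)x}_*\to0$. Let $\varepsilon>0$ and choose $M$ with $\phi_m(x)/W_m<\varepsilon$ for all $m>M$. Then, for the tail $t=(\id-\pi_N)x$, two cases arise.
\begin{itemize}
\item For $m>M$: by domination, $\phi_m(t)/W_m\le\phi_m(x)/W_m<\varepsilon$.
\item For $m\le M$: $\phi_m(t)/W_m\le m\,\norm{t}_\infty/W_1\le M\sup_{j>N}\abs{x_j}$, which is smaller than $\varepsilon$ for large $N$ because $x\in c_0$.
\end{itemize}
Thus $\norm{t}_*\le\varepsilon$ for large $N$. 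This uniform-in-$m$ splitting, which uses both the defining limit condition and $x\in c_0$, is exactly where the definition of the predual (rather than all of $d(w,1)^*$) is needed.

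With these properties in place, the lemma follows from Corollary~\ref{cor:sequence-lattices} together with the symmetry and normalization observations.
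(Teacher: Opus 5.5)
Your proof is correct and follows the paper's argument. The key step is the same splitting of $m$ at a threshold $M$: for large $m$ you use the defining limit condition together with coordinatewise domination, and for small $m$ you use the bound $m\sup_{j>N}\abs{x_j}$, which relies on $x\in c_0$ and $W_m\geq W_1=1$. Symmetry and normalization come, as in the paper, from invariance of the decreasing rearrangement.

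Your additional checks are sound bookkeeping that the paper leaves implicit: the norm property via $\phi_m(x)=\sup_{\abs{A}=m}\sum_{j\in A}\abs{x_j}$, completeness, and routing through Corollary~\ref{cor:sequence-lattices} instead of concluding directly from $\pi_Nx\to x$. One small correction: $\norm{\cdot}_*$ can be infinite for some elements of $c_0$, so it is a norm on $d_*(w,1)$, not on all of $c_0$.
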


\begin{proof}
The norm and the defining limit depend only on the decreasing rearrangement, so permutations and coordinatewise multiplication by unimodular scalars are isometries. Coordinatewise domination is contractive. It remains to prove convergence of the coordinate partial sums.

Fix $x\in d_*(w,1)$ and $\varepsilon>0$. Choose $M$ such that $\frac{\sum_{j=1}^{m}x_j^*}{W_m}<\varepsilon$ for all $m\geq M$.

Since $x\in c_0$, choose $N$ such that $\sup_{j>N}\abs{x_j}<\frac{\varepsilon}{M}$.
Put $y=x-\pi_Nx$. If $m<M$, then $W_m\geq W_1=1$ and $\frac{\sum_{j=1}^{m}y_j^*}{W_m}
 \leq m\sup_{j>N}\abs{x_j}<\varepsilon$.

If $m\geq M$, coordinatewise domination gives $\sum_{j=1}^{m}y_j^*\leq\sum_{j=1}^{m}x_j^*$,
and hence the same quotient is less than $\varepsilon$. Therefore $\norm{x-\pi_Nx}_*<\varepsilon$.
Thus the canonical vectors form a Schauder basis, and the preceding symmetry observation shows that it is normalized and $1$-symmetric.
\end{proof}

\begin{corollary}\label{cor:dstar}
With any compatible ordering, the monomials associated with the canonical basis of $d_*(w,1)$ form a Schauder basis of
$(\calH(d_*(w,1)),\tau_0)$.
\end{corollary}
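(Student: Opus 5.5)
The plan is to reduce Corollary~\ref{cor:dstar} to Theorem~\ref{thm:main-1unconditional}. The only input needed is that the canonical unit vectors form a $1$-unconditional Schauder basis of $d_*(w,1)$, and Lemma~\ref{lem:dstar-basis} already supplies this. Two routes are available, and I will take the more direct one.

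The direct route invokes Lemma~\ref{lem:dstar-basis} to conclude that $(e_j)$ is a normalized $1$-symmetric Schauder basis. A $1$-symmetric basis is in particular $1$-unconditional: multiplying coordinates by scalars $\lambda_j$ with $\abs{\lambda_j}\le 1$ does not increase the norm. I will verify this inequality from the formula for $\norm{\cdot}_*$, using the monotonicity of decreasing rearrangements under coordinatewise domination: $\abs{y_j}\le\abs{x_j}$ for all $j$ implies $y^*_j\le x^*_j$ for all $j$. Hence each partial sum $\sum_{j\le m}y_j^*$ is at most $\sum_{j\le m}x_j^*$, and so $\norm{y}_*\le\norm{x}_*$. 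The same domination shows that the defining limit condition passes from $x$ to $y$, so $y$ lies in $d_*(w,1)$.

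The alternative route goes through Corollary~\ref{cor:sequence-lattices} with $E=d_*(w,1)$. The space consists of sequences, $c_{00}$ is dense in it by the basis property in Lemma~\ref{lem:dstar-basis}, and the lattice property is exactly the domination argument above. Either route leads to Theorem~\ref{thm:main-1unconditional}, which yields the monomial Schauder basis of $(\calH(d_*(w,1)),\tau_0)$ for every compatible ordering.

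There is no substantial obstacle. The one point to state carefully is that the compact sets used in the argument are the coordinate-tail sets $K_\eta$ of Proposition~\ref{prop:fundamental-compacts}, not rearrangement-defined sets. This matters because Remark~\ref{rem:lorentz-warning} shows that rearrangement-defined sets can fail to be compact. Since the general theorem is built only on the sets $K_\eta$, this point is covered automatically, and I will mention it only for clarity.
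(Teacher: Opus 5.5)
Your proposal is correct and follows the paper's proof, which simply combines Lemma~\ref{lem:dstar-basis} with Theorem~\ref{thm:main-1unconditional}. Your rearrangement-monotonicity check that coordinatewise domination is contractive is the observation already recorded in that lemma, so the direct route you chose is exactly the paper's.
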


\begin{proof}
Apply Theorem~\ref{thm:main-1unconditional} and Lemma~\ref{lem:dstar-basis}.
\end{proof}

\begin{remark}
The proof gives the explicit subexponential estimate
\[
 c_{K_\eta,n}\leq n+1
\]
for every $\eta$ and every degree $n$. In particular, no auxiliary constant depending on a product estimate is required, and the same argument applies simultaneously to the $c_0$-sum and Lorentz-predual cases.
\end{remark}

\end{document}